\newtheorem{thm}{Theorem}[section]
\newtheorem{prop}[thm]{Proposition}
\theoremstyle{definition}
\theoremstyle{remark}
\numberwithin{equation}{section}
\newcommand{\eval}[2][\right]{\relax
  \ifx#1\right\relax \left.\fi#2#1\rvert}
\numberwithin{equation}{section}
\title{Spectral Analysis of a Class of Self-Adjoint Difference Equations}
\author{Dale T. Smith, Ph.D.\\
Fiserv.\\
Risk and Compliance Division\\
107 Technology Parkway\\
Norcross, GA 30092}
\date{}
\begin{document}
\maketitle
\abstract{In this paper, we consider self-adjoint difference equations of the form
\begin{equation}
-\Delta\left(a_{n-1}\Delta y_{n-1}\right)+b_{n}y_{n}=\lambda y_{n},n=0,1,\dots\label{eq:abstract}
\end{equation}
where $a_{n-1}>0$ for all $n\ge0$ and $b_{n}$ are real and $\lambda$ is complex. Under the assumption that $a_{n-1}$ satisfies certain
growth conditions and is limit point (that is, the associated Hamburger moment problem is determined), we prove that the existence of an exponentially
bounded solution of \eqref{eq:abstract} implies a bound on the distance from $\lambda$ to the spectrum of the associated self-adjoint operator,
and that if a solution of \eqref{eq:abstract} is bounded by a power of n for n sufficiently large, then $\lambda\in\sigma(B)$. Here, $B$ is a certain self-adjoint operator generated by \eqref{eq:abstract}. These results are the difference equation version of differential operator results of Shnol'. We use this to then prove that the spectrum of the associated orthogonal polynomials contains the closure of the set of $\lambda$ for which we can find a polynomially bounded solution. We also present a result concerning the invariance of the essential spectrum under weak perturbation of the coefficients.} \\

\noindent {\bf Keywords:} difference equation; orthogonal polynomials; exponential bound; spectral theory

\section{Introduction}
In this paper we consider the second order difference equation in self-adjoint form
\begin{equation}
-\Delta(a_{n-1}\Delta y_{n-1})+b_{n}y_{n}=\lambda y_{n},n=0,1,\dots,\label{eq1}
\end{equation}
 where $a_{n-1}>0$, and $b_{n}$ is real for n = 0, 1, \dots, and $\lambda$ is a possibly complex number. Here and throughout this paper, $\Delta$ is the forward difference operator defined by 
\begin{align*}
\Delta g_{n}=g_{n+1}-g_{n}.
\end{align*}
 We shall also assume that for any solution $y_{n}$ of \eqref{eq1}, $y_{-1}=0$. Equation \eqref{eq1} may be written in the form
\begin{equation}
-a_{n}y_{n+1}-a_{n-1}y_{n-1}+(b_{n}+a_{n}+a_{n-1})y_{n}=\lambda y_{n} \nonumber
\end{equation}
and by making the change of dependent variable $y_{n}\mapsto(-1)^{n}y_{n}$, we may write 
\begin{equation}\label{eq2}
a_{n}y_{n+1}+a_{n-1}y_{n-1}+(b_{n}+a_{n}+a_{n-1})y_{n}=\lambda y_{n}
\end{equation}
Using a theorem commonly attributed to Favard (see Ismail \cite{ismail-book}, p. 31), we see that there is a sequence of orthonormal polynomials $\{p(n;\lambda\}$
which satisfy \eqref{eq2} and there is a measure $d\mu(\lambda)$ such that 
\begin{align*}
\int_{-\infty}^{\infty}p(n;\lambda)p(m;\lambda)d\mu(\lambda)=\delta_{nm}
\end{align*}
Thus a solution of \eqref{eq1} is then $(-1)^{n}p(n;\lambda)$. Note that the polynomials $\{p(n;\lambda)\}$ satisfy the initial value problem consisting of \eqref{eq2} and the initial conditions $p(-1;\lambda)=0$ and $p(0;\lambda)=1$. There have been a number of papers in the past few years devoted to the study of orthogonal polynomials, particularly the interaction between the recurrence coefficients in \eqref{eq2}, the orthogonality measure $d\mu(\lambda)$, and the polynomials $\{p(n;\lambda)\}$.  For recent references, see \cite{ismail-book}.\\

\noindent There is an intimate connection between the spectral theory for \eqref{eq1}, orthogonal polynomials, continued fractions, asymptotic expansions of Stieltjes integrals, moment problems, variational inequalities, oscillation and disconjugacy, and a host of other topics The three volumes by Henrici \cite{henrici} are perhaps the best self-contained reference for the connections between orthogonal polynomials, continued fractions, Pade approximation, and some of the other topics mentioned above. Disconjugacy and oscillation for \eqref{eq1} is covered in Kelly and Peterson \cite{kelly-peterson}. In an earlier paper \cite{smith-spectral}, we discussed the connection between the oscillation of solutions of \eqref{eq1} and the essential spectrum of a linear operator generated by \eqref{eq1}; for the definition of this linear operator, see below. For other results on the oscillation of solutions of \eqref{eq1} and orthogonal polynomials, see van Doorn \cite{vandoorn-1} and \cite{vandoorn-2}. Mingarelli \cite{mingarelli} discusses second order difference equations from the point of view Volterra~Stieltjes integral equations, and unifies many of the results on oscillation, spectrum, and comparison of solutions proved separately for \eqref{eq1} and second-order ordinary differential equations in formal self-adjoint form. Other general references involving \eqref{eq1} are Ismail \cite{ismail-book} on orthogonal polynomials and related topics, and Jones and Thron \cite{jones-thron} on continued fractions and their relationship with Pade approximation, moment problems, and orthogonal polynomials. We have certainly not included all references to books or papers which contain something on these topics; that would be a rather large task. \\

\noindent In this paper, we pursue the connection between the spectral theory of \eqref{eq1} and exponential and polynomial upper
bounds on solutions of (1.l). It is important to note that we do not require estimates of the orthogonal polynomials, but that some solution of (1.1) have an appropriate estimate. We prove the difference equation versions of some theorems of Shnol' and Simon; see Glazman \cite{glazman}, p. 175-182 for proofs of Shnol's results in English in the elliptic differential equation case and \cite{simon-semigroups} for Simon's results. A special case of these results were announced in section 2 of \cite{smith-spectral} and \cite{smith-dissertation}, but the proofs of the results stated here are given here for the first time.
The results given in \cite{smith-spectral} and \cite{smith-dissertation} are very restrictive, since the author had to assume that (1.1) is nonoscillatory, $a_{n-1}=1$ for all n, and $b_{n}$ is bounded below. In this paper, we remove all the restrictions we just mentioned, but place growth constraints on the $a_{n}$. \\

\noindent We shall need the relevant operator theory, which is found in Hinton and Lewis \cite{hinton-lewis}. Let $H=\ell^{2}(Z^{+})$
be the Hilbert space of square-summable sequences with inner product
\[
(u,v)=\sum\limits _{n=0}^{\infty}u_{n}v_{n}^{*}.
\]
The associated norm will be denoted by $\|.\|$. Let the linear operator
$B:H\mapsto H$ be defined by 
\[
D(B)=\{y\in H:By\in H\}
\]
where 
\[
(By)_{n}=-\Delta(a_{n-1}\Delta y_{n-1})+b_{n}y_{n}
\]
The resolvent set of B is the set 
\[
\rho(B)=\{\lambda\in\mathbb{C}:B-\lambda\in\mathbb{B}(H)\},
\]
 ($\mathbb{B}(H)$ is the set of bounded linear operators on H) and the spectrum of B is the set $\sigma(B)=\mathbb{C}\setminus\rho(B)$. An eigenvalue of B is a (necessarily) real number $\lambda$ such that the equation $By=\lambda y$ has a non-zero solution in H. An eigenvalue is called a discrete eigenvalue if the associated eigenspace has finite dimension (that is, the eigenvalue has finite geometric multiplicity), and the set of discrete eigenvalues of B is denoted by $\sigma_{d}(B)$. The essential spectrum of B is the set $\sigma_{ess}(B)=\sigma(B)\setminus\sigma_{d}(B)$. Using the spectral theorem, we may characterize these parts of the spectrum in an alternative way: the essential spectrum is the set of real numbers which are of infinite geometric multiplicity or are limit points of the spectrum; the discrete spectrum is the set of points of the spectrum which are isolated. For more information on these terms and the spectral theory of linear operators, see Weidmann \cite{weidmann}.\\

\noindent Finally, equation \eqref{eq1} is said to be limit-circle if every solution of \eqref{eq1} with $Im(\lambda)\ne0$ is in H; \eqref{eq1} is said to be limit-point if it is not limit circle. Note that this classification is independent of $\lambda$ (see \cite{akhiezer}). For a discussion of these terms and the reason
behind the names limit-circle/limit point, see Chapter 1 of Akhiezer \cite{akhiezer}. An important result for us is the following proposition, which is a combination of Theorem 2 in Hinton and Lewis \cite{hinton-lewis} and Corollary 2.2.4 and Theorem 2.1.2 in Akhiezer \cite{akhiezer}. 
\begin{prop}\label{prop-1} The linear operator B is self adjoint if and only if \eqref{eq1} is limit point if and only if the associated Hamburger moment problem is determined.
\end{prop}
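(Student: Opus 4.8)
This proposition is a synthesis of classical facts, and the plan is to run both equivalences through a single object, the deficiency indices of the minimal operator generated by \eqref{eq1}. Let $B_{0}$ be the closure of $B$ restricted to the finitely supported sequences, the difference expression at $n=0$ being interpreted via the imposed condition $y_{-1}=0$. A discrete Green/Lagrange identity shows that $B_{0}$ is symmetric with $B_{0}^{*}=B$, so $B$ is the maximal operator; and since the left endpoint $n=0$ is regular with its boundary condition already fixed, it contributes nothing to the deficiency count, so the deficiency indices of $B_{0}$ are equal and take the value $0$ or $1$. Consequently $B=B_{0}^{*}$ is self-adjoint if and only if $B_{0}$ is, i.e. if and only if these indices are $(0,0)$; when they equal $(1,1)$, $B$ properly extends $B_{0}=B^{*}$ and so is not self-adjoint.

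For the equivalence with the limit point condition I would identify $\ker(B-\lambda)$ for $\operatorname{Im}\lambda\neq0$: it is the space of $\ell^{2}$ solutions of \eqref{eq1} subject to $y_{-1}=0$, hence a subspace of the one-dimensional space of all solutions with $y_{-1}=0$. If \eqref{eq1} is limit circle, every solution lies in $H$, so $\ker(B-\lambda)$ is one-dimensional and $B$ is not self-adjoint. If \eqref{eq1} is limit point, the Weyl limit-point analysis shows that the $\ell^{2}$ solutions near infinity form a one-dimensional space which, for $\operatorname{Im}\lambda\neq0$, contains no nonzero solution with $y_{-1}=0$, whence $\ker(B\mp i)=\{0\}$ and $B$ is self-adjoint; this is Theorem~2 of Hinton and Lewis \cite{hinton-lewis}. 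For the equivalence with determinacy of the Hamburger moment problem, observe that the substitution $y_{n}\mapsto(-1)^{n}y_{n}$ used to pass from \eqref{eq1} to \eqref{eq2} is implemented by a diagonal unitary operator on $H$, so $B$ is unitarily equivalent to the Jacobi operator $J$ of the recurrence \eqref{eq2}, whose orthonormal polynomials $\{p(n;\lambda)\}$ are the solutions normalized by $p(-1;\lambda)=0$ and $p(0;\lambda)=1$ and which has orthogonality measure $d\mu$. By the classical theory of the moment problem (Akhiezer \cite{akhiezer}, Theorem~2.1.2 and Corollary~2.2.4), $J$ is essentially self-adjoint on the finitely supported sequences, equivalently the minimal Jacobi operator has deficiency indices $(0,0)$, precisely when the moment problem for $d\mu$ is determined. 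Since unitary equivalence preserves deficiency indices, chaining these statements proves the proposition.

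The step that requires genuine care is the bookkeeping at $n=0$: one must verify that prescribing $y_{-1}=0$ really does turn $0$ into a regular endpoint contributing nothing to the deficiency count, so that $B=B_{0}^{*}$ is indeed the maximal operator and the $(0,0)$/$(1,1)$ dichotomy is available. With that in hand, the remainder is an assembly of the cited results of Hinton--Lewis and Akhiezer together with the elementary observation that $y_{n}\mapsto(-1)^{n}y_{n}$ is unitary; no new estimates are needed.
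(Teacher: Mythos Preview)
The paper does not supply a proof of this proposition at all: it is stated as a direct combination of Theorem~2 of Hinton and Lewis \cite{hinton-lewis} with Corollary~2.2.4 and Theorem~2.1.2 of Akhiezer \cite{akhiezer}, and the proof is left to those references. Your proposal is consistent with that and simply fills in the connective tissue, invoking exactly the same two sources and linking them through the deficiency indices of the minimal operator together with the unitary $y_{n}\mapsto(-1)^{n}y_{n}$ passing from \eqref{eq1} to the Jacobi recurrence \eqref{eq2}; so there is no substantive difference in approach, only in the level of detail provided.

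One small point worth tightening in your write-up: when you argue the limit-point case, the sentence ``the $\ell^{2}$ solutions near infinity form a one-dimensional space which \dots\ contains no nonzero solution with $y_{-1}=0$'' skirts circularity, since the cleanest way to see that the subordinate $\ell^{2}$ solution cannot also satisfy the left boundary condition is precisely that non-real $\lambda$ cannot be an eigenvalue of a self-adjoint operator. It is safer to read the paper's definition of limit point/limit circle as referring to the full two-dimensional solution space of the three-term recurrence (without the condition $y_{-1}=0$); then limit point means at most one independent solution lies in $H$, and a standard Wronskian/Green's-formula argument (this is what Hinton--Lewis actually do) shows directly that the particular solution with $y_{-1}=0$ is not in $H$ for $\operatorname{Im}\lambda\neq0$, giving deficiency indices $(0,0)$ without appealing to self-adjointness in advance.
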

\noindent In the limit-circle case (or if the associated moment problem is not determined), the situation is more complicated. In that case the operator B is not self adjoint by Proposition 1.1, but a restriction of B may have self adjoint extensions. Welstead (\cite{welstead-bc} and \cite{welstead-sa}) and Shi and Sun \cite{shi-sun} have shown that a boundary condition at infinity is needed to construct the self adjoint extensions of this restriction of B. For the abstract theory of self adjoint extensions of symmetric operators, see Weidmann \cite{weidmann}.

\section{The Main Results and Discussion}

The main results are as follows.

\begin{thm}\label{thm-1} Suppose there are constants $C_{1},\gamma,n_{0},L>0$
such that for all $n>n_{0}$ 
\begin{align*}
\left|\Delta a_{n-1}\right|\le C_{1}a_{n-1},\sum\limits _{k=n_{0}+1}^{n}a_{k-1}^{2}\le Le^{\gamma n}
\end{align*}
 and suppose \eqref{eq1} is limit point. Suppose there are
numbers $\beta,C_{2}>0$ and a solution $y_{n}$ of \eqref{eq1}
such that for all $n>n_{0}$ 
\[
\left|y_{n}\right|\le C_{2}e^{\beta n}
\]
 and that $\lambda$ is real. There there is a constant $C=C(\lambda,C_{1},C_{2})$
such that 
\[
d(\lambda,\sigma(B))\le C\left(e^{2\beta}-1\right)^{1/2}
\]
 where $d(\lambda,\sigma(B))$ is the distance between $\lambda$
and the spectrum of B. \end{thm}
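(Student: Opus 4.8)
The plan is to exploit the standard Shnol'-type strategy: from a polynomially-controlled (here, exponentially-controlled) solution one manufactures an approximate eigenfunction by multiplying the given solution $y_n$ by a suitable cutoff/weight sequence $\varphi_n$, so that $\|(B-\lambda)(\varphi y)\| / \|\varphi y\|$ is small, which by self-adjointness of $B$ (available by Proposition~\ref{prop-1} since \eqref{eq1} is limit point) forces $d(\lambda,\sigma(B))$ to be small. Concretely, I would set $u_n = \varphi_n y_n$ for an appropriately chosen weight, for instance $\varphi_n = e^{-\alpha n}$ with $\alpha$ slightly larger than $\beta$ so that $u \in \ell^2(\mathbb{Z}^+)$, and then compute $(B-\lambda)u$. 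Since $y$ solves $(B-\lambda)y = 0$ exactly, the Leibniz/summation-by-parts expansion of $-\Delta(a_{n-1}\Delta(\varphi_{n-1}y_{n-1}))$ produces only \emph{commutator} terms involving the differences $\Delta\varphi_n$ (and $\Delta a_{n-1}$ paired with $\Delta\varphi$); these are the terms one must bound.

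The key steps, in order, are: (i) write out the discrete Leibniz rule $\Delta(\varphi_{n-1}y_{n-1}) = \varphi_n\Delta y_{n-1} + (\Delta\varphi_{n-1})y_{n-1}$ and then apply $\Delta(a_{n-1}\cdot)$ again, collecting the terms that do not cancel against $(B-\lambda)y = 0$; (ii) estimate $\|(B-\lambda)u\|^2 = \sum_n |((B-\lambda)u)_n|^2$ from above using the hypothesis $|\Delta a_{n-1}| \le C_1 a_{n-1}$ to replace differences of $a$ by $a$ itself, and the exponential bound $|y_n| \le C_2 e^{\beta n}$ together with the explicit form of $\varphi$ to bound everything by a geometric-type sum; the weight-difference factors $|\Delta\varphi_n|$ will carry a factor comparable to $(e^{2\beta}-1)^{1/2}$ after one takes square roots and compares numerator to denominator; (iii) bound $\|u\|^2 = \sum_n \varphi_n^2 y_n^2$ from below (a single well-chosen block of indices $n \in [n_0+1, N]$ suffices, or the tail, depending on whether the solution is genuinely growing or decaying — one may need to split into cases, or use a family of weights $\varphi^{(R)}$ localized at scale $R$ and let $R\to\infty$); (iv) combine (ii) and (iii): self-adjointness gives $\mathrm{dist}(\lambda,\sigma(B)) \le \|(B-\lambda)u\|/\|u\|$, and the ratio is $\le C(\lambda,C_1,C_2)(e^{2\beta}-1)^{1/2}$ after the algebra. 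The growth condition $\sum_{k=n_0+1}^n a_{k-1}^2 \le Le^{\gamma n}$ is what guarantees the relevant sums converge (and controls how fast the weight must decay relative to the allowed growth of the coefficients), so it enters in verifying $u \in D(B)$ and that the numerator sum is finite.

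I would handle the choice of weight carefully: rather than a pure exponential, a cutoff that is $1$ on $[0,N]$, decays like $e^{-\epsilon(n-N)}$ on $[N,\infty)$, gives a cleaner lower bound on $\|u\|$ (it just inherits $\sum_{n=0}^N y_n^2$, which is at least one nonzero term), while the numerator only picks up contributions from the transition region near $n=N$ plus the far tail; letting $\epsilon$ be tuned in terms of $\beta$ and then optimizing produces the factor $(e^{2\beta}-1)^{1/2}$. The constant $C$ then depends on $\lambda$ through the $(b_n + \ldots - \lambda)$ coefficient bookkeeping and on $C_1, C_2$ through the coefficient and solution bounds, as claimed.

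\textbf{The main obstacle} I anticipate is the lower bound on $\|u\|$ and the matching of constants: one must ensure the cutoff/weight does not kill the solution everywhere (if $y$ decays fast the ``blocks'' all have small mass, but then $\lambda$ should be near the spectrum anyway, so this is consistent — still, making the estimate uniform and extracting exactly $(e^{2\beta}-1)^{1/2}$ rather than, say, $e^\beta-1$ or $\beta$, requires choosing the weight's decay rate as a function of $\beta$ and doing the geometric sums exactly). A secondary technical point is justifying $u = \varphi y \in D(B)$ and that the formal computation of $(B-\lambda)u$ is legitimate in $\ell^2$; this is where the growth hypothesis $\sum a_{k-1}^2 \le Le^{\gamma n}$ and the ratio condition $|\Delta a_{n-1}|\le C_1 a_{n-1}$ are essential, and one must check the decay rate of $\varphi$ is fast enough to beat $e^{\gamma n/2}$.
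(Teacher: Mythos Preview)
Your overall Shnol'/commutator strategy matches the paper's, but the execution differs in a way that leaves the step you yourself flag as ``the main obstacle'' unresolved. The paper does \emph{not} use an exponentially decaying weight; it uses a sharp cutoff $v_{r,n}=\mathbf{1}_{[n_0,r]}(n)$, so that $v_r y$ has finite support (hence automatically lies in $D(B)$) and the commutator $[B,v_r]y$ is supported only on a window of fixed width near $n=r$. Two ingredients then do the real work. First, an a~priori estimate (the paper's Theorem~\ref{thm-4}) bounds $\sum a_{k-1}^2(\Delta y_{k-1})^2$ on any finite window by a constant times $\sum a_{k-1}^2 y_k^2$ on a slightly larger window; this handles the $\Delta y$ terms coming out of the commutator, which your sketch does not address. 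Second --- and this is what produces $(e^{2\beta}-1)^{1/2}$ --- the paper introduces $F(r)=\sum_{n=n_0+1}^{r} a_{n-1}^2 y_n^2$ and runs a pigeonhole/contradiction argument: if one had $F(r+4)\ge e^{2\beta+\delta_1}F(r-2)$ for all large $r$, then $F(r)$ would grow at least like $e^{(2\beta+\delta_1)r}$, contradicting the hypothesis $|y_n|\le C_2 e^{\beta n}$ together with $\sum a_{k-1}^2\le Le^{\gamma n}$ once $\delta_1>\delta+\gamma$. Hence there exist $r_p\to\infty$ with $F(r_p+4)<e^{2\beta+\delta_1}F(r_p-2)$, and along this subsequence $\|(B-\lambda)(v_{r_p}y)\|^2\le K_1\bigl(F(r_p+4)-F(r_p-2)\bigr)<K_1(e^{2\beta+\delta_1}-1)F(r_p-2)$; normalizing and invoking the Weyl criterion finishes the job.

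Your exponential-weight idea runs into trouble precisely where you anticipate: with $\varphi_n=e^{-\alpha n}$ the commutator terms are spread over all $n$ and carry factors of $a_n$, so the numerator $\|(B-\lambda)u\|^2$ involves $\sum a_n^2(\cdots)e^{-2\alpha n}$ while the denominator $\|u\|^2=\sum e^{-2\alpha n}y_n^2$ carries no $a_n$ weight --- there is no reason for the ratio to be controlled without further input. The sharp cutoff collapses the numerator to a boundary contribution $F(r+4)-F(r-2)$, and it is the \emph{choice of good radii} $r_p$ via the pigeonhole argument that substitutes for a direct lower bound on $\|u\|$. This is the missing idea in your plan; ``tuning $\epsilon$ in terms of $\beta$ and optimizing'' will not by itself select the right scales $N$.
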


\begin{thm}\label{thm-2} Suppose that the first two hypotheses of
Theorem \ref{thm-1} are true, but suppose there is a solution $y_{n}$
of \eqref{eq1} such that for \emph{any} $\beta>0$ there is
a finite constant $C_{3}(\beta)>0$ with 
\[
\left|y_{n}\right|\le C_{3}(\beta)e^{\beta n}
\]
 for all $n>n_{0}$ and that $\lambda$ is real. Then $\lambda\in\sigma(B)$.
In particular, if there is a real number $\theta$ and a constant
$C_{4}(\lambda)$ such that 
\[
\left|y_{n}\right|\le C_{4}n^{\theta}
\]
 for all $n>n_{0}$, then $\lambda\in\sigma(B)$. \end{thm}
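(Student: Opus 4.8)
The plan is to deduce Theorem~\ref{thm-2} from Theorem~\ref{thm-1} by a limiting argument in the exponential rate. Fix the solution $y_n$ provided by the hypothesis and the real number $\lambda$. For every $\beta>0$ we are given a finite constant $C_3(\beta)$ with $|y_n|\le C_3(\beta)e^{\beta n}$ for $n>n_0$, so the full set of hypotheses of Theorem~\ref{thm-1} holds with $C_2=C_3(\beta)$, and that theorem yields
\[
d(\lambda,\sigma(B))\le C\bigl(\lambda,C_1,C_3(\beta)\bigr)\,\bigl(e^{2\beta}-1\bigr)^{1/2}
\]
for \emph{every} $\beta>0$. Since the left-hand side does not depend on $\beta$, it suffices to show the right-hand side can be driven to $0$; then $d(\lambda,\sigma(B))=0$, and because $\sigma(B)=\mathbb{C}\setminus\rho(B)$ is closed, this gives $\lambda\in\sigma(B)$.

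The one point that requires going back into the proof of Theorem~\ref{thm-1} rather than using it as a black box — and this is the main obstacle — is the dependence of the constant $C$ on its third argument: one needs $\beta\mapsto C(\lambda,C_1,C_3(\beta))$ to grow slowly enough as $\beta\downarrow0$ that its product with $(e^{2\beta}-1)^{1/2}=O(\beta^{1/2})$ still tends to $0$. In the trial-vector construction underlying Theorem~\ref{thm-1} the solution bound $C_2$ enters only through the normalization of the approximate eigenvector and the placement of the cut-off, both of which are harmless in the limit, so $C$ depends on $C_2$ only mildly (at worst polynomially), which is precisely what must be checked. Granting this, choosing any sequence $\beta_k\downarrow0$ forces $d(\lambda,\sigma(B))=0$ and hence $\lambda\in\sigma(B)$, proving the first assertion.

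For the ``in particular'' statement, suppose $|y_n|\le C_4 n^{\theta}$ for all $n>n_0$. If $\theta\le0$ this already bounds $y_n$, and one may take $C_3(\beta)=C_4\max(1,n_0^{\theta})$ for every $\beta>0$. If $\theta>0$, then for every $\beta>0$ and every $n>n_0$,
\[
|y_n|\le C_4\,n^{\theta}e^{-\beta n}\cdot e^{\beta n}\le C_4\Bigl(\sup_{t>0}t^{\theta}e^{-\beta t}\Bigr)e^{\beta n}=C_4\left(\frac{\theta}{e\beta}\right)^{\theta}e^{\beta n},
\]
the supremum being attained at $t=\theta/\beta$. Thus $C_3(\beta)=C_4(\theta/e\beta)^{\theta}<\infty$ for every $\beta>0$, so $y_n$ meets the hypothesis of the first part and $\lambda\in\sigma(B)$. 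Note that this $C_3(\beta)$ grows only polynomially in $1/\beta$, exactly the mild growth the limiting argument tolerates.
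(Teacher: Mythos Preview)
Your approach is essentially the paper's: the paper's proof of the first assertion is literally the one line ``follows directly from Theorem~\ref{thm-1},'' and for the polynomial case it writes $|y_n|\le C_4 n^\theta = C_4 (n^\theta e^{-\beta n})e^{\beta n}$ and observes the bracketed factor is bounded in $n$---exactly your computation. Your one stated worry, the dependence of $C$ on $C_2=C_3(\beta)$, is in fact empty: tracing the proof of Theorem~\ref{thm-1}, the constant $K_1$ in the final estimate $\|(B-\lambda)w_p\|^2<K_1(e^{2\beta+\delta_1}-1)$ comes only from the constant $2(1+(C_1+1)^2)$ of Theorem~\ref{thm-4} and absolute combinatorial factors, while $C_2$ enters solely in choosing $r_0$ large enough that $C_2^2\le e^{\delta r}$, so $C$ does not depend on $C_2$ at all and no growth control on $C_3(\beta)$ is needed.
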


\begin{thm}\label{thm-3} Suppose there are constants $C_{1},\gamma,n_{0},L>0$
such that for all $n>n_{0}$ 
\begin{align*}
\left|\Delta a_{n-1}\right|\le C_{1}a_{n-1},\sum\limits _{k=n_{0}+1}^{n}a_{k-1}^{2}\le Le^{\gamma n}
\end{align*}
 and suppose \eqref{eq1} is limit-point. Let
\[
E=\{\lambda\in\mathbb{R}:\thickspace has\thickspace a\thickspace polynomially\thickspace bounded\thickspace solution\}.
\]
 Then $\bar{E} \subset \sigma(B)$. \end{thm}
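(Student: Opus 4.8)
The plan is to deduce \thmref{thm-3} directly from \thmref{thm-2} together with the fact that the spectrum of a closed operator is closed. First I would observe that the two growth conditions imposed in \thmref{thm-3} are precisely the first two hypotheses of \thmref{thm-1}, and that \eqref{eq1} is assumed limit point; these are exactly the standing hypotheses required by \thmref{thm-2}. So I would fix an arbitrary $\lambda\in E$ and let $y_n$ be a polynomially bounded solution of \eqref{eq1}, say $|y_n|\le C_4 n^{\theta}$ for all $n>n_0$. Since $n^{\theta}=o(e^{\beta n})$ for every $\beta>0$, there is a finite constant $C_3(\beta)$ (depending also on $\theta$ and $C_4$) with $|y_n|\le C_3(\beta)e^{\beta n}$ for all $n>n_0$; hence \thmref{thm-2} applies and gives $\lambda\in\sigma(B)$. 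As $\lambda\in E$ was arbitrary, this shows $E\subseteq\sigma(B)$.

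To finish I would pass to the closure. Because \eqref{eq1} is limit point, Proposition~\ref{prop-1} shows $B$ is self-adjoint, hence closed, so $\sigma(B)$ is a closed subset of $\mathbb{C}$ (indeed of $\mathbb{R}$). Taking closures in $E\subseteq\sigma(B)$ then yields $\overline{E}\subseteq\overline{\sigma(B)}=\sigma(B)$, which is the assertion.

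I do not expect any real obstacle here once \thmref{thm-2} is available: the only points needing verification are the elementary bound $n^{\theta}\le C_3(\beta)e^{\beta n}$ (eventually, for each $\beta>0$) and the standard closedness of the spectrum. In particular, it is not necessary to rebuild the approximate-eigenvector (Weyl-sequence) construction underlying \thmref{thm-1}--\thmref{thm-2} for the limit points of $E$; closedness of $\sigma(B)$ encapsulates exactly the limiting step one would otherwise carry out by hand. As a closing remark, combining $\overline{E}\subseteq\sigma(B)$ with the identification of $\sigma(B)$ with the support of the orthogonality measure $d\mu$ for $\{p(n;\lambda)\}$ (legitimate here since the Hamburger moment problem is determined) gives the statement about the spectrum of the orthogonal polynomials mentioned in the abstract.
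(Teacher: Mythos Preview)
Your argument is correct and is exactly how the paper obtains \thmref{thm-3}: the paper does not give a separate proof of \thmref{thm-3} but treats it as an immediate consequence of the second statement of \thmref{thm-2} (polynomially bounded solution $\Rightarrow\lambda\in\sigma(B)$) together with the closedness of $\sigma(B)$. Your use of Proposition~\ref{prop-1} to justify that $B$ is self-adjoint (hence $\sigma(B)$ closed) is the right way to close the loop, and your remark that the Weyl-sequence machinery need not be rebuilt for limit points of $E$ is well taken.
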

 
\noindent We must again point out that these theorems do not require an estimate on the orthogonal polynomials obtained from \eqref{eq1}. Also, these results should be compared with those of Smith \cite{smith-spectral} and \cite{smith-dissertation}. Our results here do not require a hypothesis on the $b_{n}$ (other than that it is a real sequence) and the hypotheses on the $a_n$ are weak enough to allow for many interesting examples, such as polynomial ore exponential behavior.\\

\noindent Theorems \ref{thm-1}, \ref{thm-2} and \ref{thm-3} should be compared to the differential equation results found in \cite{glazman} (starting on p. 176) and \cite{simon-semigroups}. In \cite{glazman}, the potential $q$ of an elliptic differential operator $-A+q$ is required to be bounded below, but this condition is weakened in \cite{simon-semigroups}. A key part of the proof of Theorem \ref{thm-2} is Theorem 2.1 (see below), which gives an a-priori bound on $\Delta y_{n-1}$ in terms of $y_n$. There are corresponding theorems for the differential equation case in \cite{glazman}, p. 176-178 and \cite{simon-semigroups}. The proof of Theorem \ref{thm-3} relies on a generalization of Theorem 9, p. 197 of \cite{glazman} and is related to Stieltjes conjecture (see section 58 of \cite{glazman}). Several years ago, Simon \cite{simon-semigroups} proved a theorem like \ref{thm-3} for a class operators related to the N-body problem of quantum mechanics.\\

\noindent The reader may suspect that the assumption of self-adjointness in Theorems \ref{thm-1}, \ref{thm-2} and \ref{thm-3} is an artifact of our method, and may be removed by using other methods. But self-adjointness seems to be essential to prove theorems such as these, even in the differential equations case. The reason for this is as follows. In the limit-circle case it is known that self-adjoint extensions of a restriction $B_{0}$ of the operator B may be considered as finite-dimensional perturbations of $B_{0}$ and thus (by Weyl's theorem) have the same essential spectrum (see \cite{glazman}, p. 10, 44-46). But discrete eigenvalues of an abstract operator may in fact disappear if the operator is perturbed by an arbitrarily small bounded operator (see Kato \cite{kato}; the theorem is known as the Weyl-Von Neumann Theorem). Indeed, one of the earliest quantum mechanical models exhibits the phenomenon of loss of discrete eigenvalues - the Stark effect (a hydrogen atom in a uniform electric field; here the perturbation is unbounded, see Simon). Thus, if discrete eigenvalues are present, the spectra of the various self adjoint extensions of a restriction of B may be quite different. Because of this it is not clear whether the results of this paper hold in the limit-circle case. A condition we do not investigate is the essential self-adjointness of B. These results would probably require us to know some form of relative boundedness for \eqref{eq1}. We do not know if such results yet exist in the literature. For a discussion of relative boundedness and its connection with essential self-adjointness, see Weidmann \cite{weidmann}

\section{Proof of the Main Results}

The following theorem plays a key role in the proof of Theorem \ref{thm-1}, as does the corresponding result for the differential equations case; see Simon \cite{simon-semigroups}.

\begin{thm}\label{thm-4} Suppose that there is a constant $C_{1}>0$
such that for all n sufficiently large 
\begin{align}\label{eq3}
\left|\Delta a_{n-1}\right|\le C_{1}a_{n-1}
\end{align}
 and suppose $y_{n}$ is any sequence (not necessarily a solution of \eqref{eq1}). Then there is an integer m, and for integers r, s, and n which satisfy $m\le r\le s\le n$, we have 
\begin{align}\label{eq4}
\sum\limits _{k=r}^{s}a_{k-1}^{2}\left(\Delta y_{k-1}\right)^{2}\le2\left(1+\left(C_{1}+1\right)^{2}\right)\sum\limits _{k=m-1}^{n}a_{k-1}^{2}y_{k}^{2}.
\end{align}
 \end{thm}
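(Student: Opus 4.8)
The plan is to prove the discrete Caccioppoli-type inequality \eqref{eq4} by summation by parts, using a cutoff function to localize the estimate. First I would introduce a ``bump'' sequence $\varphi_k$ supported on $\{m-1,\dots,n\}$ with $\varphi_k = 1$ for $r \le k \le s$, and with $\varphi$ climbing from $0$ to $1$ on $\{m-1,\dots,r\}$ and descending back to $0$ on $\{s,\dots,n\}$. The natural choice is the piecewise-linear ramp, but since we have no control on how the index ranges compare in size, a cleaner choice is to let $\varphi$ rise over a block of length one at each end — i.e. $\varphi_{m-1}=0$, $\varphi_k = 1$ for $k \ge m$, and symmetrically $\varphi_k = 1$ for $k \le n-1$, $\varphi_n = 0$ — so that $\Delta\varphi$ is supported on just two indices. (This is where the innocuous ``there is an integer $m$'' in the statement comes from: we only need $m$ large enough that \eqref{eq3} holds from $m-1$ onward, and we need at least one index of room below $r$, so $m \le r$.)

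Next I would start from the quantity $\sum_k \varphi_k^2 \, a_{k-1}^2 (\Delta y_{k-1})^2$, which dominates the left-hand side of \eqref{eq4} because $\varphi \equiv 1$ on $[r,s]$ and all terms are nonnegative. Writing $a_{k-1}^2(\Delta y_{k-1})^2 = a_{k-1}(\Delta y_{k-1})\cdot a_{k-1}(\Delta y_{k-1})$ and summing by parts in the first factor, one transfers a $\Delta$ onto $a_{k-1}(\Delta y_{k-1})\varphi_k^2$. The key algebraic identity is the discrete product rule $\Delta(f_k g_k) = f_{k+1}\Delta g_k + g_k \Delta f_k$ applied to $f_k = \varphi_k^2$ and $g_k = a_{k-1}\Delta y_{k-1}$, together with $\Delta(a_{k-1}\Delta y_{k-1})$, which — and this is the only place the recurrence \emph{would} enter if $y$ were a solution — we simply leave as an abstract second-difference term. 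After summation by parts the sum splits into (i) a term containing $y_k\,\Delta(a_{k-1}\Delta y_{k-1})$, (ii) a term containing $a_{k-1}(\Delta y_{k-1})\,y_k\,\Delta(\varphi_k^2)$, and boundary terms that vanish because $\varphi$ vanishes at the two endpoints. Term (i) must be re-examined: in fact the clean route avoids the second difference entirely by summing by parts once more, or better, by choosing to sum by parts in the \emph{other} factor so that what appears is $y_{k+1}\,a_{k-1}\,\Delta y_{k-1}$ and $\Delta a_{k-1}$; I would organize the computation so that only $\Delta a_{k-1}$ (controlled by \eqref{eq3}) and $\Delta\varphi_k$ (which is $\pm 1$ at two points) appear, never $\Delta^2 y$.

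The final step is a sequence of applications of the elementary inequality $2\alpha\beta \le \epsilon\alpha^2 + \epsilon^{-1}\beta^2$ (Cauchy–Schwarz / Young). Each cross term of the form $a_{k-1}(\Delta y_{k-1}) \cdot (\text{something})\, y_j$ is split so that the $a_{k-1}^2(\Delta y_{k-1})^2$ piece, with a small coefficient $\epsilon$, can be absorbed back into the left-hand side, while the $y_j^2$ piece accumulates on the right. The coefficient $|\Delta a_{k-1}| \le C_1 a_{k-1}$ is exactly what makes the absorbed term have the same homogeneity $a_{k-1}^2(\Delta y_{k-1})^2$ as the left side; the factor $(C_1+1)^2$ in \eqref{eq4} is the square of $C_1 + 1$ coming from $|\Delta a_{k-1}| + (\text{contribution of }\Delta\varphi)$, and the overall $2(1 + (C_1+1)^2)$ is bookkeeping from one application of Young at $\epsilon = \tfrac12$ (or similar) plus the $y_k^2$ produced directly by term (i). I expect the main obstacle to be purely organizational: choosing the summation-by-parts pattern and the cutoff so that (a) no $\Delta^2 y$ survives, (b) the boundary terms genuinely cancel given that $y$ is an \emph{arbitrary} sequence, and (c) the constants collapse to exactly $2(1+(C_1+1)^2)$ rather than something merely comparable. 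Once the right bracketing is found, every individual estimate is routine.
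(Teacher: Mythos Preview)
Your plan takes a genuinely different route from the paper, and is far more elaborate than necessary. The paper's argument uses no summation by parts, no cutoff sequence, and no absorption parameter: it simply expands $(\Delta y_{k-1})^2 = (y_k - y_{k-1})^2 \le 2(y_k^2 + y_{k-1}^2)$ pointwise, sums over $k$, shifts the index in the $y_{k-1}^2$ sum, and then uses \eqref{eq3} in the form $a_k \le (C_1+1)a_{k-1}$ to convert the resulting $a_k^2 y_k^2$ back into $a_{k-1}^2 y_k^2$. The constant $2(1+(C_1+1)^2)$ falls out immediately, and enlarging the summation range from $[r,s]$ to $[m-1,n]$ absorbs the one-step index shift. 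That is the entire proof.

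Your Caccioppoli-style strategy is the natural one when $y$ is a \emph{solution}, because then the term $\Delta(a_{k-1}\Delta y_{k-1})$ produced by summation by parts collapses to $(b_k-\lambda)y_k$ and can be thrown onto the right-hand side. Here $y$ is arbitrary, and you correctly flag that ``term (i) must be re-examined''; but your proposed fix---summing by parts again, or in the other factor---does not actually eliminate the second difference. Both factors in $a_{k-1}^2(\Delta y_{k-1})^2$ are the same $\Delta y_{k-1}$, so transferring the $\Delta$ off one of them inevitably lands it on a product containing the other, and the product rule regenerates a $\Delta^2 y$. The only way to avoid $\Delta^2 y$ is not to sum by parts at all and instead expand $\Delta y_{k-1} = y_k - y_{k-1}$ directly---at which point the cutoff $\varphi$ and the $\epsilon$-absorption are superfluous and you have rediscovered the paper's two-line argument. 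So the organizational obstacle you anticipate is real, and the resolution is to abandon the Caccioppoli machinery in favor of the pointwise bound; the weight being $a_{k-1}^2$ rather than $a_{k-1}$ is precisely what makes the trivial estimate close.
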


\begin{proof}First, choose m large enough such that 
\begin{align}\label{eq5}
\left|\Delta a_{k-1}\right|\le C_{1}a_{k-1} \text{ for } k \ge m-1.
\end{align}
 For $k\ge m-1$, \eqref{eq5} implies that 
\begin{align}
\left|\frac{a_{k}}{a_{k-1}}-1\right|\le C_{1}, \nonumber
\end{align}
 or 
\begin{equation}\label{eq6}
1-C_{1}\le\frac{a_{k}}{a_{k-1}}\le1+C_{1}.
\end{equation}
 Now 
\begin{equation}
\sum\limits _{k=m}^{n}a_{k-1}^{2}\left(\Delta y_{k-1}\right)^{2}\le\sum\limits _{k=r}^{s}a_{k-1}^{2}\left(\left|y_{k-1}\right|+\left|y_{k}\right|\right)^{2}. \nonumber
\end{equation}
By first squaring on the right of this last inequality, and then using the inequality $2ab \le a^{2}+b^{2}$ on the cross-terms, we have 
\begin{align}\label{eq7}
\sum\limits _{k=m}^{n}a_{k-1}^{2}\left(\Delta y_{k-1}\right)^{2}\le2\sum\limits _{k=m-1}^{n-1}a_{k}^{2}y_{k}^{2}+2\sum\limits _{k=m}^{n}a_{k-1}^{2}y_{k}^{2}.
\end{align}
In the first some on the right of \eqref{eq7} use 
\begin{equation}
a_{k}=a_{k-1}\frac{a_{k}}{a_{k-1}} \nonumber
\end{equation}
 and \eqref{eq6} to get 
\begin{equation}
\sum\limits _{k=m}^{n}a_{k-1}^{2}\left(\Delta y_{k-1}\right)^{2}\le2\left(1+\left(C_{1}+1\right)^{2}\right)\sum\limits _{k=m-1}^{n}a_{k}^{2}y_{k}^{2}.
\end{equation}
\end{proof}

\noindent We would like to note here that the proof of \ref{thm-4} is much shorter and easier than the proof given in \cite{smith-dissertation} and the proof for the differential equation case on p. 176-178 in \cite{glazman}. Also, we do not require $y_{n}$ to be a solution, as does \cite{glazman}, \cite{smith-dissertation}, and \cite{smith-spectral}.

\subsection{Proof of Theorem \ref{thm-1}}

The proof of Theorem \ref{thm-1} relies on using $[U,V]z=U(Vz)-V(Uz)$, the commutator of U and V, which is defined for $z\in D(U)\cap D(V)$. In the theory of the Schrodinger equation, commutator estimates such as those used below have been found to be very useful; see Simon \cite{simon-semigroups}. The results are in Section C.4 of \cite{simon-semigroups}. Commutator estimates have also been used in the theory of Jacobi matrices; see \cite{dombrowski-commutator}. The proof of Shnol's theorem found in Simon \cite{simon-semigroups} is where we learned the idea of using the commutator. The complication we have with the difference equation case in this paper is mainly due to the product rule, which is
\[
\Delta\left(s_{n}t_{n}\right)=s_{n+1}\Delta t_{n}+t_{n}\Delta s_{n}
\]
or
\[
\Delta\left(s_{n}t_{n}\right)=s_{n}\Delta t_{n}+t_{n+1}\Delta s_{n}
\]
We also define the shift operator $(Eu)_{n}=u_{n-1}$ and the norm on the integers in the interval $[s,t]$ by
\[
\|u\|_{[s,t]}^{2}=\sum\limits_{n=s}^{t}u_{n}^{2}
\]
\begin{proof}[Proof of Theorem \ref{thm-1}]
Let $n_{0}$ be a positive integer chosen such that for all $n\ge n_{0}+1$
\begin{equation}
\left|\Delta a_{n-1}\right|\le C_{1}a_{n-1} \text{ and } \left|y_{n}\right|\le C_{2}e^{\beta n}. \nonumber
\end{equation}
 Let $r-1>n_{0}$ and let $v_{r}$ be the sequence 
\begin{align*}
v_{r}=v_{r,n}= & \begin{cases}
1 & \text{ if } n_{0}\le n\le r,\\
0 & \text{ otherwise.}
\end{cases}
\end{align*}
In what follows, $\Delta$ is differencing with respect to $n$. Now $v_{r}y$ is in the domain of $B$ (because it has finite support), so
\[
\left[B,v_{r}\right]\left( v_r y \right)=B\left(v_{r}y\right)-v_{r}By
\]
or
\[
B\left(v_{r}y\right)=\left[B,v_{r}\right]y+\lambda v_{r}y.
\]
by using the fact that $y$ satisfies $(B-\lambda)y=0$. Thus,
\begin{equation}
(B-\lambda)(v_{r}y)=\left[B,v_{r}\right]y \nonumber \label{thm-1-eq0}
\end{equation}
Using the first product rule above, we have
\[
\left[B,v_{r}\right]y_{n}=-\Delta\left(a_{n-1}\Delta\left(v_{r,n-1}y_{n}\right)\right)+b_{n}v_{r,n}y_{n}+\left(b_{n}-\lambda\right)v_{r,n}y_{n}.
\]
Expanding this out by using the product rule again gives us
\begin{eqnarray}
\left[B,v_{r}\right]y_{n} = -\Big( v_{r,n} \Delta \left( a_{n-1} \Delta y_{n-1} \right) & + & a_n \Delta y_n \Delta v_{r,n} + \Delta v_{r,n} \Delta \left( a_{n-1} y_{n-1} \right) \nonumber \\
& + & a_{n-1}y_{n-1} \Delta^2 v_{r,n-1} \Big) \nonumber \\
& + & \left( b_n - \lambda \right) v_{r,n} y_n.
\end{eqnarray}
But in this last equation, the first and last terms on the right add together to give zero, since $y$ is a solution of \eqref{eq1}. Thus
\[
\left[B,v_{r}\right]y_{n}=-\Delta v_{r,n}\left(a_{n}\Delta y_{n}+\Delta\left(a_{n-1}y_{n-1}\right)\right)-a_{n-1}y_{n-1}\Delta^{2}v_{r,n}
\]
Using the product rule again for $\Delta$ gives
\begin{align*}
\left[B,v_{r}\right]y_{n}=-\Delta v_{r,n}\left(a_{n}\Delta y_{n}+y_{n}\Delta a_{n - 1}+a_{n-1}\Delta y_{n-1}\right)-a_{n-1}^{2}y_{n-1}^{2}\Delta^{2}v_{r,n-1}.
\end{align*}
We need $\left(\left[B,v_{r}\right]y_{n}\right)^{2}$. This is an easy, but tedious operation. We first square both sides, and in the cross terms, use the inequality $2ab\le a^{2}+b^{2}$ to get
\begin{multline}
\left(\left[B,v_{r}\right]y_{n}\right)^{2} \le C\,\left(\Delta v_{r,n}\right)^{2}\left\{a_{n}^{2}\left[\left(\Delta y_{n}\right)^{2}+\left(\Delta y_{n-1}\right)^{2}\right]+y_{n-1}^{2}\left(\Delta a_{n-1}\right)^{2}\right\}\\
+ C\, a_{n-1}^{2}y_{n-1}^{2}\left(\Delta^{2}v_{r,n-1}\right)^{2}.\label{thm-1-eq1}
\end{multline}
Now we may write this using the norm as
\begin{multline}\label{thm-1-eq2} \left(\left[B,v_{r}\right]y_{n}\right)^{2} \le C \,\Big\{\|a\Delta y\|_{[r,r+3]}^{2}+\|a\left(\Delta Ey\right)\|_{[r,r+3]}^{2}\\
+\|\Delta(Ea)\, Ey\|_{[r,r+3]}^{2}+\|Ea\, Ey\|_{[r-1,r+3]}^{2}\Big\} \end{multline} 
Note that we have used the following facts
\begin{eqnarray*}
supp\left(\Delta\left(Ev_{r}\right)\right) & \subset & integers\; in\; the\; interval\,\left[r-1,r+2\right];\\
supp\left(\Delta^{2}\left(Ev_{r}\right)\right) & \subset & integers\; in\; the\; interval\,\left[r-1,r+3\right];
\end{eqnarray*}
in obtaining \eqref{thm-1-eq1} from \eqref{thm-1-eq2} and the constant $C > 0$ in these last inequalities does not depend on $n$. These may be derived from the definition of the sequence $v_{r}$. Now, by changing the index of summation,
\begin{equation}
\|a\Delta y\|_{[r,r+3]}^{2}=\|Ea\,\Delta Ey\|_{[r+1,r+4]}^{2}\le\|Ea\,\Delta Ey\|_{[r,r+4]}^{2}.\label{thm-1-eq3}
\end{equation}
Also, by the hypothesis on $a_{n-1}$,
\begin{equation}
\|a\Delta Ey\|_{[r,r+3]}^{2}\le\left(C_{1}+1\right)^{2}\|Ea\Delta Ey\|_{[r,r+3]}^{2}\label{thm-1-eq4}
\end{equation}
Using first \eqref{thm-1-eq3} and \eqref{thm-1-eq4} in \eqref{thm-1-eq2}, and then using Theorem \nameref{thm-1}twice, we have
\begin{equation}
\|\left[B,v_{r}\right]y\|^{2}\le K\left\{\|Ea\, y\|_{[r-1,r+4]}^{2}+2\|Ea\, y\|_{[r,r+3]}^{2}+\|Ea\, y\|_{[r-1,r+3]}^{2}\right\}, \label{thm-1-eq5}
\end{equation}
where $K$is a positive constant. But \eqref{thm-1-eq1} allows us to write this last equation immediately above as
\begin{equation}
\|\left(B - \lambda\right)v_{r}y\|^{2}\le K\left\{\|Ea\, y\|_{[r-1,r+4]}^{2}+2\|Ea\, y\|_{[r,r+3]}^{2}+\|Ea\, y\|_{[r-1,r+3]}^{2}\right\}. \label{thm-1-eq6}
\end{equation}
We now define, for $r\ge n_{0}+1$,
\begin{equation}
F\left(r\right)=\sum\limits_{n=n_{0}+1}^{r}a_{n-1}^{2}y_{n}^{2}.
\end{equation}
Using $\eqref{thm-1-eq0}$, $\eqref{thm-1-eq5}$ may be written as
\begin{equation}
\|\left(B-\lambda\right)\left(v_{r}y\right)\|^{2}\le K\left(4F\left(r+4\right)-3F\left(r-2\right)\right)
\end{equation}
or
\begin{equation}
\|\left(B-\lambda\right)\left(v_{r}y\right)\|^{2}\le K_{1}\left(F\left(r+4\right)-F\left(r-2\right)\right)\label{thm-1-eq6}
\end{equation}
Using the exponential bound on $y_{n}$ gives us that for any $\delta>0$ there is an $r_{0}>n_{0}+1$ such that for $r \ge r_{0}$
\begin{equation}
F\left(r\right) \le e^{\left( 2\beta + \delta \right) r} \sum \limits_{n = n_0 + 1}^{r} a_{n-1}^2.
\end{equation}
We now show that for a given $\delta_1 > 0$ there is a sequence of integers $r_p \to \infty$ such that
\begin{equation}\label{thm-1-eq7}
F\left(r_p + 4 \right) < e^{2\beta + \delta_1}F\left(r_p - 2 \right)
\end{equation}
(strict inequality is important here as we shall see below). If no such sequence exists, then there is an $r_1 \ge n_0 + 1$ such that for all $r \ge r_1$
\begin{equation}
F\left(r + 4 \right) \ge e^{2\beta + \delta_1}F\left(r - 2\right) \label{thm-1-eq-8}
\end{equation}
From this it follows that there is a constant $M > 0$ such that for all $r$ large enough
\begin{equation}
F\left(r \right) \ge M e^{(2 \beta + \delta_1)r}. \label{thm-1-eq-9}
\end{equation}
Using $\eqref{thm-1-eq7}$ and $\eqref{thm-1-eq8}$, and the hypothesis on $\sum a_{n-1}^2$, we have
\begin{equation}
e^{(\delta_1 - \delta - \gamma)r} \le \frac{1}{LM} \mbox{ for } r \ge \max{r_0, r_1}.
\end{equation}
We now choose $\delta_1 > \delta + \gamma$, which gives us a contradiction. Thus, our assertions involving \eqref{thm-1-eq7} is true. Now translating this back to the norm and operator gives us that
\begin{equation}\label{thm-1-eq8}
\|\left( B - \lambda \right) w_p \|^2 < K_1 \left( e^{2 \beta + \delta_1} - 1 \right),
\end{equation}
where $w_p = w_{p,n} = v_{r_p,n} y_n$. Now $w_p$ converges weakly to zero but $\|w_p\| = 1$, so $w_p$ is not compact but is bounded. Using Theorem 10, p. 14 of Glazman \cite{glazman} on \eqref{thm-1-eq8} proves the theorem since $\delta > 0$ is arbitrary.
\end{proof}

\begin{proof}[Proof of Theorem \ref{thm-2}]
The first statement follows directly from Theorem \ref{thm-1}. To prove the second statement, we need only show there is an exponential bound, and apply the first statement of the theorem. Now for any $\beta > 0$, if $n > n_0$, then
\begin{equation}
\left | y_n \right | \le C_3 n^{\theta} = C_3 \frac{n^{\theta}}{e^{\beta n}} e^{\beta n} \nonumber
\end{equation}
But this last term is bounded independently in n.
\end{proof}
\noindent The following theorem is proved in Glazman \cite{glazman}, p. 197, Theorem 9.

\begin{thm}\label{thm5} For any $\epsilon > 0$ and for $\mu$-almost all values of $\lambda$ (where $\mu$ is the spectral measure of B), there is a constant $L(\epsilon, \lambda) > 0$ such that for all n
\begin{equation}
\left | (-1)^n p(n;\lambda) \right | \le L(\epsilon, \lambda) n^{1/2 + \epsilon}. \nonumber
\end{equation}
Here the $p(n;\lambda)$ are the orthogonal polynomials obtained from \eqref{eq2}.
\end{thm}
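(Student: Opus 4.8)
The plan is to use the one property that singles out the measure $\mu$: by Favard's theorem (recalled in the Introduction) the polynomials $\{p(n;\cdot)\}_{n\ge 0}$ are orthonormal in $L^2(d\mu)$, where in the limit-point case $\mu$ is exactly the spectral measure of the self-adjoint operator $B$ at the cyclic vector $e_0$ (Proposition \ref{prop-1} provides self-adjointness, and $p(n;\cdot)\mapsto e_n$ is unitary). In particular
\[
\int_{-\infty}^{\infty} p(n;\lambda)^2 \, d\mu(\lambda) = 1 \qquad \text{for every } n.
\]
Since $\lambda$ is real and the recurrence coefficients and initial data are real, each $p(n;\lambda)$ is real and $|(-1)^n p(n;\lambda)| = |p(n;\lambda)|$, so the alternating sign is irrelevant and it suffices to bound $|p(n;\lambda)|$.

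First I would introduce the nonnegative function $g(\lambda) = \sum_{n=0}^{\infty} (n+1)^{-(1+2\epsilon)}\, p(n;\lambda)^2$ and integrate it term by term. By Tonelli's theorem (monotone convergence) and the normalization above,
\[
\int_{-\infty}^{\infty} g(\lambda)\, d\mu(\lambda) = \sum_{n=0}^{\infty} \frac{1}{(n+1)^{1+2\epsilon}} < \infty,
\]
and it is precisely the requirement that this last series converge that forces the exponent $1/2+\epsilon$ rather than $1/2$. Hence $g(\lambda) < \infty$ for $\mu$-almost every $\lambda$; fix such a $\lambda$. Convergence of a series of nonnegative terms bounds the terms, so
\[
M(\epsilon,\lambda) := \sup_{n\ge 0} \frac{p(n;\lambda)^2}{(n+1)^{1+2\epsilon}} < \infty,
\]
which gives $|p(n;\lambda)| \le \sqrt{M(\epsilon,\lambda)}\,(n+1)^{1/2+\epsilon}$ for all $n\ge 0$, and hence, using $(n+1)^{1/2+\epsilon} \le 2^{1/2+\epsilon} n^{1/2+\epsilon}$ for $n\ge 1$, the stated bound with $L(\epsilon,\lambda) = 2^{1/2+\epsilon}\sqrt{M(\epsilon,\lambda)}$.

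I do not expect a genuine obstacle here. The two points that require a word of care are the interchange of summation and integration — immediate from Tonelli since every term is nonnegative — and the identification of the measure in the statement with the orthogonality measure $d\mu$ of \eqref{eq2}, which is exactly the content of Favard's theorem together with the limit-point hypothesis of Proposition \ref{prop-1}. It is worth remarking that the same computation with the weight $(n+1)\bigl(\log(n+2)\bigr)^{1+\delta}$ in place of $(n+1)^{1+2\epsilon}$ (whose reciprocals are still summable for $\delta>0$) sharpens the conclusion to $|p(n;\lambda)| \le L(\delta,\lambda)\, n^{1/2}(\log n)^{(1+\delta)/2}$ for $\mu$-almost every $\lambda$; the weaker form stated above is all that the proof of Theorem \ref{thm-3} will need.
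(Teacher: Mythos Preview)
Your argument is correct and is the classical one: orthonormality gives $\int p(n;\lambda)^2\,d\mu(\lambda)=1$, Tonelli lets you sum against the weights $(n+1)^{-(1+2\epsilon)}$, and finiteness of the weighted sum $\mu$-a.e.\ bounds the individual terms. The only cosmetic point is that the inequality as stated in the paper cannot hold at $n=0$ (where $p(0;\lambda)=1$ but $n^{1/2+\epsilon}=0$); your version with $(n+1)^{1/2+\epsilon}$ is the honest formulation, and the conversion to $n^{1/2+\epsilon}$ for $n\ge 1$ is fine.

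As for comparison with the paper: the paper does not give its own proof of this theorem at all. It simply records the statement and attributes it to Glazman \cite{glazman}, p.~197, Theorem~9. Your proof is essentially the argument one finds there (and in many other places), so there is no methodological divergence to discuss. Your closing remark on the logarithmic sharpening is also standard and correct, though the paper makes no use of it.
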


\noindent But we need more than this to prove our conjecture that under the hypotheses of Theorem \ref{thm-1}, $\bar{E} = \sigma(B)$. \\


\noindent To show these results are non-trivial, we consider a special case of an example found in Wimp \cite{wimp}. Consider the difference equation
\begin{equation}
-n(n + 1) y_{n+1} + 2 n^2 y_n - n(n - 1)y_{n - 1} = -n \lambda y_n. \label{orig-ex}
\end{equation}
In \cite{wimp} it is shown there are two linearly independent solutions of \eqref{orig-ex} such that
\begin{align}
y_{1,n} & = n^{-3/4} e^{2\sqrt{-n \lambda}} (1 + o(1)), \\
y_{2,n} & = n^{-3/4} e^{-2\sqrt{-n \lambda}} (1 + o(1)) 
\end{align}
We shall change the dependent variable using the idea of Hinton and Lewis \cite{hinton-lewis}. For a difference equation of the form
\begin{equation}
-p_n y_{n + 1} + q_n y_n - p_{n - 1}y_{n - 1} = \lambda c_n y_n \nonumber
\end{equation}
let
\begin{align*}
\tilde{y}_n & = \sqrt{c_n} y_n, \\ \nonumber
a_n & = c_n^{-1/2} p_n c_{n + 1}^{-1/2}, \\ \nonumber
b_n & = c_n^{-1} q_n. \nonumber
\end{align*}
Then equation in $\tilde{y}$ is of the form \eqref{eq2}. Note that this change of variables is related to the concept of equivalent continued fractions; see \cite{jones-thron}. Applying this to \eqref{orig-ex} gives the equation
\begin{equation}\label{mod-ex}
-(n(n + 1)^{1/2} \tilde{y}_{n + 1} + 2n \tilde{y}_n - (n(n - 1))^{1/2} \tilde{y}_{n - 1} = -\lambda \tilde{y}_n
\end{equation}
which has two linearly independent solutions
\begin{align*}
\tilde{y}_{1,n} & = n^{-1/4} e^{2\sqrt{-n \lambda}} (1 + o(1)), \\ \nonumber
\tilde{y}_{2,n} & = n^{-1/4} e^{-2\sqrt{-n \lambda}} (1 + o(1))  \nonumber
\end{align*}
The sequence $a_n = (n(n + 1))^{1/2}$ satisfies the hypotheses of Theorem \ref{thm-1} and \eqref{mod-ex} is limit-point (see \cite{akhiezer}). For all $\lambda$ there is a bounded solution. Our results show that the spectrum of the associated self-adjoint operator is $(-\infty, \infty)$. This result appears to be new.

\section{Invariance of the Essential Spectrum}
In this section, we consider the difference equation
\begin{equation}\label{perturb-diff}
-\Delta \left[ \left( a_{n-1} + \eta_{n-1} \right) \Delta y_{n-1} \right] + \left( b_n + \psi_n \right) y_n = \lambda y_n
\end{equation}
as a perturbation of \eqref{eq1}. Here, we require that $a_{n-1} + \eta_{n-1} > 0$ for all $n$. The result we present is applicable even if \eqref{eq1} or \eqref{perturb-diff} do not generate self-adjount operators or whether the associated moment problems are determined or not.\\

\noindent To state the theorem, we need to extend our operator-theoretic discussion found in the introduction using \cite{hinton-lewis}. Let $A : H \in H$ deonte the operator defined using \eqref{perturb-diff}, that is,
\begin{equation}
D(A) = \{ y \in H : Ay \in H\}
\end{equation}
where $Ay_n$ is the left-hand side of \eqref{perturb-diff}. We now define the minimal operators $A_0$ and $B_0$ as follows. Let $T = A$ or $B$ (the same discussion works for both) and let
\begin{equation}
D_1 = \{ y \in H : \mbox{ only finitely many of the } y_n \mbox{ are non-zero} \}. \nonumber
\end{equation}
Now let
\begin{equation}
T_1 = T \big |_{D_{0}} \nonumber
\end{equation}
be the restriction of $T$ to $D_1$. A calculation shows that $T_1$ is symmetric and densely defined, so it has a closure $T_0$. This minimal operator may or may not have self-adjoint extensions. For the relevant terms and theorems which we have used here, see Weidmann \cite{weidmann}.

\begin{thm}\label{thm5}
Suppose $b_n \ge \alpha > 0$ for all $n$ sufficiently large and
\begin{equation}
\lim_{n \to \infty} \frac{\eta_n}{a_n} = \lim_{n \to \infty} \frac{\psi_n}{b_n} = 0 \nonumber
\end{equation}
\end{thm}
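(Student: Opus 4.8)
The statement above has lost its conclusion in transcription; as the section heading and the abstract indicate, the assertion to be proved is that the essential spectra coincide, $\sigma_{ess}(A_{0})=\sigma_{ess}(B_{0})$ --- equivalently, in the limit-point case $\sigma_{ess}(A)=\sigma_{ess}(B)$, and in the limit-circle case every self-adjoint extension of $A_{0}$ has the same essential spectrum as every self-adjoint extension of $B_{0}$. The plan is to pass to the associated quadratic forms, show that the form of $A$ is a \emph{relatively form-compact} perturbation of the form of $B$, and then invoke the stability of the essential spectrum under such perturbations (the form version of Weyl's theorem; see Weidmann \cite{weidmann}, and compare Hinton and Lewis \cite{hinton-lewis}).

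First I would record the form identity: summing by parts and using $y_{-1}=0$, for every finitely supported $y$
\[
(By,y)=\sum_{n\ge0}a_{n-1}(\Delta y_{n-1})^{2}+\sum_{n\ge0}b_{n}y_{n}^{2}=:t_{B}[y],
\]
and likewise $(Ay,y)=t_{A}[y]$, with $a_{n-1},b_{n}$ replaced by $a_{n-1}+\eta_{n-1}$, $b_{n}+\psi_{n}$. Since $b_{n}\ge\alpha>0$ for $n\ge N_{0}$ and only finitely many of the remaining $b_{n}$ occur (each a fixed real number), $t_{B}$ and $t_{A}$ are bounded below and closable; let $\widehat{B}$ and $\widehat{A}$ be the associated self-adjoint (Friedrichs) operators. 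In the limit-point case $\widehat{B}=B_{0}=B$ and $\widehat{A}=A_{0}=A$; in the limit-circle case $B_{0}$ has deficiency indices $(1,1)$, so $\widehat{B}$ and any other self-adjoint extension of $B_{0}$ differ by a rank-one perturbation of the resolvent and hence $\sigma_{ess}(\widehat{B})=\sigma_{ess}(B_{0})$, and similarly for $A$. It therefore suffices to prove $\sigma_{ess}(\widehat{A})=\sigma_{ess}(\widehat{B})$.

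The core of the argument is the estimate for the perturbing form
\[
q[y]:=t_{A}[y]-t_{B}[y]=\sum_{n\ge0}\eta_{n-1}(\Delta y_{n-1})^{2}+\sum_{n\ge0}\psi_{n}y_{n}^{2}.
\]
Splitting each sum at an index $N\ge N_{0}$,
\[
|q[y]|\le\Big(\sup_{n>N}\frac{|\eta_{n-1}|}{a_{n-1}}\Big)\sum_{n>N}a_{n-1}(\Delta y_{n-1})^{2}+\Big(\sup_{n>N}\frac{|\psi_{n}|}{b_{n}}\Big)\sum_{n>N}b_{n}y_{n}^{2}+C_{N}\|y\|^{2},
\]
where $C_{N}$ collects the finitely many terms with $n\le N$. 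By the hypotheses $\eta_{n}/a_{n}\to0$ and $\psi_{n}/b_{n}\to0$, both suprema tend to $0$ as $N\to\infty$; meanwhile $\sum_{n>N}a_{n-1}(\Delta y_{n-1})^{2}\le t_{B}[y]+C\|y\|^{2}$ and $\sum_{n>N}b_{n}y_{n}^{2}\le t_{B}[y]+C\|y\|^{2}$ (here $b_{n}y_{n}^{2}\ge0$ for $n\ge N_{0}$ is used). Hence $q$ has $t_{B}$-form bound $0$ --- in particular $D(t_{A})=D(t_{B})$ --- and, writing $q_{N}$ for the truncation of $q$ to indices $\le N$ and $L_{N}$ for the associated bounded finite-rank operator, the operators $Q_{N}:=(\widehat{B}+c)^{-1/2}L_{N}(\widehat{B}+c)^{-1/2}$ satisfy $\|Q-Q_{N}\|\to0$, where $Q$ is the bounded operator on $H$ representing $q$ relative to $t_{B}$. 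Since each $Q_{N}$ has finite rank, $Q$ is compact; that is, $q$ is $t_{B}$-form-compact. By the cited stability theorem this gives $\sigma_{ess}(\widehat{A})=\sigma_{ess}(\widehat{B})$, and the theorem follows.

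The only genuinely nontrivial steps I anticipate are (i) making the passage from the displayed inequality to the compactness of $Q$ fully rigorous --- this is the standard truncation argument for relative form-compactness, but it must be set up carefully --- and (ii) the bookkeeping that transfers the conclusion from the Friedrichs extensions $\widehat{A},\widehat{B}$ back to the minimal operators $A_{0},B_{0}$ when \eqref{eq1} or \eqref{perturb-diff} is in the limit-circle case (and the two may be of different type). I would also emphasise the point that makes the hypotheses of exactly the right strength: in $t_{B}$ the coefficients $a_{n-1}$ and $b_{n}$ appear only to the first power, which is why $\eta_{n}/a_{n}\to0$ and $\psi_{n}/b_{n}\to0$ suffice, rather than the stronger conditions (involving square roots of $a_{n},b_{n}$) one would be forced into by attempting to run the argument through $\|By\|$ and relative \emph{operator}-compactness.
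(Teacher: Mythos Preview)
Your reconstruction of the missing conclusion is correct, and your argument via quadratic forms is sound. The paper, however, does not actually give a proof: it only indicates that the proof ``closely resembles the proof of Theorem~25 in section~8 of \cite{glazman}'' and states that ``the key to the proof is to show that $A_{0}-B_{0}$ is relatively compact with respect to $B_{0}$ and then to use Weyl's theorem on the invariance of the essential spectrum under relatively compact perturbations.'' So the paper's intended route is \emph{operator}-level relative compactness of the difference $A_{0}-B_{0}$, not the form-compactness you use.

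Your approach is therefore genuinely different. You pass to the sesquilinear forms $t_{A},t_{B}$, exhibit the perturbation as a form $q=t_{A}-t_{B}$, show $q$ has $t_{B}$-bound zero by the tail estimate, and obtain compactness of the sandwiched operator $(\widehat{B}+c)^{-1/2}L(\widehat{B}+c)^{-1/2}$ as a norm limit of finite-rank truncations. This buys you a clean match between the hypotheses $\eta_{n}/a_{n}\to0$, $\psi_{n}/b_{n}\to0$ and the first-power appearance of $a_{n-1},b_{n}$ in $t_{B}$, and it handles the limit-circle case transparently via the Friedrichs extension. The paper's operator-compactness route, following Glazman, would instead estimate $\|(A_{0}-B_{0})y\|$ against $\|B_{0}y\|+\|y\|$ and extract compactness from the vanishing of the tail coefficients; your closing remark that this ``would force stronger conditions'' is a bit too strong --- the paper asserts (and Glazman's differential-equation argument supports) that the same hypotheses suffice at the operator level --- but your form argument is certainly the more transparent of the two and avoids the second-order cross terms that arise when one expands $-\Delta(\eta_{n-1}\Delta y_{n-1})$.
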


\noindent This result is the difference equation analogue of the result for differential equations found in Glazman \cite{glazman}. The proof of Theorem \ref{thm5} is not given here, since it closely resembles the proof of Theorem 25 in section 8 of \cite{glazman}. The key to the proof is to show that $A_0 - B_0$ is relatively compact with respect to $B_0$ and then to use Weyl's theorem on the invariance of the essential spectrum under relatively compact perturbations; see Weidmann \cite{weidmann} and Chapter 1 of \cite{glazman}.\\

\newpage{}


\begin{thebibliography}{10}

\bibitem{akhiezer}Akhiezer, N. I., \emph{The Classical Moment Problem and some Related Questions in Analysis}, New York, Hafner Pub. Co., 1965.

\bibitem{dombrowski-commutator}Joanne Dombrowski, \emph{A commutator approach to absolute continuity for unbounded Jacobi operators}, J. Math. Anal. Appl., \textbf{378}, no.~1, (2011) p. 133--139.

\bibitem{glazman}Glazman, I. M., \emph{Direct Methods of Qualitative Spectral Analysis}, Jerusalem, Israel Program for Scientific Translations,
1965.

\bibitem{henrici}Henrick, P., \emph{Applied and Computational Complex Variables}, in three volumes, New York, Wiley, 1974.

\bibitem{hinton-lewis}Hinton, D. B. and Lewis, R. T., \emph{Spectral analysis of second order linear difference equations}, J. Math. Anal. Appl., \textbf{63} (1978), p. 421-438.

\bibitem{ismail-book}Ismail, M. E. H., \emph{Classical and Quantum Orthogonal Polynomials in One Variable}, 2005, Cambridge University Press.

\bibitem{janas-moszynski}Janas, J. and Moszyski, M., \emph{Spectral properties of Jacobi matrices by asymptotic analysis}, J. Approx. Theory, \textbf{120}, no.~2, (2003), p. 309--336.

\bibitem{janas-naboko-stolz}Janas, J, Naboko, S, and Stolz, G., \emph{Decay Bounds on Eigenfunctions and the Singular Spectrum of Unbounded Jacobi
Matrices}, Int Math Res Notices, \textbf{4} (2009), p. 736-764.

\bibitem{jones-thron}Jones, W. B. and Thron, W. J., \emph{Continued Fractions. Analytic Theory and Applications}, Cambridge University Press, 2009

\bibitem{kato}Kato, T., \textit{Perturbation Theory for Linear Operators}, Springer-Verlag, 1966.

\bibitem{kelly-peterson}Kelly, W. G. and Peterson, A. C., \emph{Difference Equations. An Introduction with Applications}, San Diego, Academic Press, 2000.

\bibitem{mingarelli}Mingarelli, A. B., \emph{Voltrerra-Stielties Integral Equations and Generalized Ordinary Differential Expressions}, Lecture Notes in Mathematics no. 989, Berlin, Springer-Verlag, 1983.

\bibitem{sahbani}Sahbani, J., \emph{Spectral theory of certain unbounded Jacobi matrices}, J. Math. Anal. Appl., \textbf{342}, no. 1, (2008), p. 663-681.

\bibitem{shi-sun}Shi, Y. and Sun, H., \emph{Self-adjoint extensions for second-order symmetric linear difference equations}, Lin. Alg. Appl., \textbf{434}, no. 4, (2011), p. 903-930.

\bibitem{simon-semigroups}Simon, B., \emph{Schrodinger semigroups}, Bull. Amer. Math. Soc., \textbf{7} (1982), p. 447-526.

\bibitem{smith-spectral}Smith, D. T., \emph{On the spectral analysis of self adjoint operators generated by second order difference equations}, Proc. Roy. Soc. Edinburgh, \textbf{118A} (1991), p. 139-151.

\bibitem{smith-matrix}Smith, D. T., \emph{Exponential decay of resolvents and discrete eigenfunctions of banded infinite matrices}, J. Approx. Theory, \textbf{68} (1991), p. 83-97.

\bibitem{smith-dissertation}Smith, D. T., \emph{Exponential Decay of Resolvents of Banded Infinite Matrices and Asyrnptotics of Linear Difference Equations}, Dissertation, Georgia Institute of Technology, March, 1990.

\bibitem{vandoorn-1}van Doorn, E. A., \emph{On oscillation properties and the interval of orthogonality of orthogonal polynomials}, SIAM J. Math., \textbf{15} (1984), p. 1031-1042.

\bibitem{vandoorn-2}van Doorn, E. A., \emph{A note on orthogonal polynomials and oscillation criteria for second order linear difference equations}, J. Math. Anal. Appl., \textbf{12} (1986), p. 354-359.

\bibitem{weidmann}Weidmann, J., \emph{Linear Operators in Hilbert Spaces}, Graduate Texts in Mathematics, no 68, New York, Springer-Verlag, 1980.

\bibitem{welstead-bc}Welstead, S. I., \emph{Boundary conditions at infinity for difference equations of limit circle type}, J. Math. Anal. Appl., \textbf{89} (1982), p. 442-461.

\bibitem{welstead-sa}Welstead, S. J., \emph{Self-adjoint extensions of Jacobi matrices of limit circle type}, J. Math. Anal. Appl., \textbf{89} (1982), p. 315-326.

\bibitem{wimp}Wimp, J., \emph{Computation with Recurrence Relations}, London, Pitman Publishing Ltd., 1984.\end{thebibliography}
\end{document}